\numberwithin{equation}{section}
\numberwithin{figure}{section}
\theoremstyle{plain}
\newtheorem{thm}{\protect\theoremname}[section]
  \theoremstyle{definition}
  \newtheorem{defn}[thm]{\protect\definitionname}
  \theoremstyle{plain}
  \newtheorem{cor}[thm]{\protect\corollaryname}
  \theoremstyle{remark}
  \newtheorem{rem}[thm]{\protect\remarkname}
\theoremstyle{plain}
\newcommand{\cw}{\stackrel{\mathcal{D}}{\rightharpoonup}}
\newcommand{\N}{\mathbb{N}}
\newcommand{\R}{\mathbb{R}}
\newcommand{\Z}{\mathbb{Z}}
\newcommand{\BV}{\dot{BV}(\mathbb{R}^N)}
\newcommand{\wlim}{\operatorname{w-lim}}
  \providecommand{\corollaryname}{Corollary}
  \providecommand{\definitionname}{Definition}
  \providecommand{\remarkname}{Remark}
\providecommand{\theoremname}{Theorem}
\begin{document}

\title[Defect of compactness]{Defect of compactness in spaces of bounded variation}

\author{Adimurthi}

\thanks{Research supproted by a visiting scholar grant from the Wenner-Gren
Foundations}

\address{TIFR-CAM, Sharadanagar, P.B. 6503, Bangalore 560065, India }

\email{aditi@math.tifrbng.res.in}

\author{Cyril Tintarev}

\address{Uppsala University, P.O.Box 480, 75 106 Uppsala, Sweden }

\email{tintarev@math.uu.se}

\thanks{\emph{2010 Mathematics subject classification:} Primary 46B50, 46B99,
26B30 Secondary 46E35, 46N20, 35H20, 35J92.}

\thanks{\emph{Key words and phrases:} functions of bounded variation, 1-Laplacian,
concentration compactness, profile decomposition, weak convergence,
cocompactness, minimization problems, subelliptic Sobolev spaces.}
\begin{abstract}
Defect of compactness for non-compact imbeddings of Banach spaces
can be expressed in the form of a profile decomposition. Let $X$
be a Banach space continuously imbedded into a Banach space $Y$,
and let $D$ be a group of linear isometric operators on $X$. A profile
decomposition in $X$, relative to $D$ and $Y$, for a bounded sequence
$(x_{k})_{k\in\N}\subset X$ is a sequence $(S_{k})_{k\in\N}$, such
that $(x_{k}-S_{k})_{k\in\N}$ is a convergent sequence in $Y$, and,
furthermore, $S_{k}$ has the particular form $S_{k}=\sum_{n\in\N}g_{k}^{(n)}w^{(n)}$
with $g_{k}^{(n)}\in D$ and $w^{(n)}\in X$. %
This paper extends the profile decomposition proved by Solimini \cite{Solimini}
for Sobolev spaces $\dot{H}^{1,p}(\R^{N})$ with $1<p<N$ to the non-reflexive
case $p=1$. Since existence of ``concentration profiles'' $w^{(n)}$
relies on weak-star compactness, and the space $\dot{H}^{1,1}$ is
not a conjugate of a Banach space, we prove a corresponding result
for a larger space of functions of bounded variation. The result extends
also to spaces of bounded variation on Lie groups.
\end{abstract}
\maketitle

\section{\label{intro}Introduction }

In presence of a compact imbedding of a reflexive Banach space $X$
into another Banaxh space $Y$, Banach-Alaoglu theorem implies that
any bounded sequence in $X$ has a subsequence convergent in $Y$.
If the imbedding $X\hookrightarrow Y$ is continuous but not compact,
it may be possible to characterize a suitable subsequence as convergent
in $X$ once one subtracts a suitable ``defect of compactness'',
which typically, for sequences of functions, isolates the singular
behavior of the sequence. In broad sense this approach is known as
\emph{concentration compactness}, and in its more specific form, when
the defect of compactness is expressed as a sum of elementary concentrations,
is called profile decomposition. Profile decompositions were introduced
by Michael Struwe in 1984 for particular class of sequences in Sobolev
spaces. 
\begin{defn}
Profile decomposition of a sequence $(x_{k})$ in a reflexive Banach
space $X$, relative to a group $D$ of isometries of $X$, is an
asymptotic representation of $x_{k}$ as a convergent sum $S_{k}=\sum_{n\in\N}g_{k}^{(n)}w^{(n)}$
with $g_{k}^{(n)}\in D$, $w^{(n)}\in X$, such that $g_{k}(x_{k}-S_{k})\stackrel{}{\rightharpoonup}0$
for any sequence $(g_{k})\subset D$. In the latter case one says
that $x_{k}-S_{k}$ converges to zero $D$-weakly. 
\end{defn}
We refer the reader for motivation of profile decomposition as an
extension of the Banach-Alaoglu theorem, and a proof of both via non-standard
analysis, to Tao \cite{TaoBlog2}. For \emph{general} bounded sequences
in Sobolev spaces $\dot{H}^{1,p}(\R^{N})$, the profile decomposition,
relative to the group of translations and dilations, was proved in
\cite{Solimini}, and the $D$-weak convergence of the remainder was
identified as convergence in the Lorentz spaces $L^{p^{*},q}$, $q>p$,
where $p^{*}=\frac{pN}{N-p}$, and $1<p<N$ (which includes $L^{p^{*}}$
but excludes $L^{p^{*},p}$). The result of \cite{Solimini} was later
reproduced by Gérard \cite{Gerard} and Jaffard \cite{Jaffard}, who
extended it to the case of fractional Sobolev spaces, but, on the
other hand, gave a weaker form of remainder. For general Hilbert spaces,
equipped with a non-compact group of isometries of particular type,
existence of profile decomposition was proved in \cite{SchinTin}.
This, in turn, stimulated the search for new concentration mechanisms,
i.e. different groups $D$, that yield profile decompositions in concrete
functional spaces. In particular profile decompositions were proved
with inhomogeneous dilations $j^{-1/2}u(z^{j}$), $j\in\N$, with
$z^{j}$ denoting an integer power of a complex number, for problems
in the Sobolev space $H_{0}^{1,2}(B)$ of the unit disk, related to
the Trudinger-Moser functional; and with the action of the Galilean
invariance, together with shifts and rescalings, involved in the loss
of compactness in Strichartz imbeddings for the nonlinear Schrödinger
equations. For a more comprehensive summary of known profile decompositions,
including Besov and Triebel-Lizorkin spaces we refer the reader to
a recent survey \cite{survey}. Profile decomposition in the general,
uniformly convex and uniformly smooth, Banach space was proved recently
in \cite{ST2}, which required to introduce a new mode of convergence
of weak type (\cite{Part1}). Not unlike \cite{ST2}, this paper studies
profile decomposition by adapting the prior work on the topic to the
mode of convergence of weak type which is pertinent in the new setting.
A profile decomposition in the general non-reflexive Banach space
remains an open problem, and one should note that no profile decomposition
is possible when $p=\infty$, see e.g. a counterexample in \cite{ST2}. 

The space $L^{1}(\R^{N})$ lacks weak (or weak-star) sequential compactness:
indeed, consider a sequence of characteristic functions normalized
in $L^{1}$, $(\frac{1}{|A_{n}|}\chi_{A_{n}})$, where $\R^{N}\supset A_{1}\supset A_{2}\supset...$
are closed nested sets with $|\cap_{n\in\N}A_{n}|=0$ which has no
weakly convergent subsequence, while at the same time it converges
weakly in the sense of measures to the Dirac delta-function. This
suggests that when one studies a mapping on $L^{1}(\R^{N})$, it may
be beneficial to extend it to a larger domain, namely to the space
of finite signed measures, which it a conjugate of the Banach space
$C_{0}(\R^{N})$ and thus has the weak-{*} compactness property. Similarly,
it may be benefitial for a study of a mapping on the Sobolev space
$\dot{H}^{1,1}(\mathbb{R}^{N})$ to extend its domain to the space
of measurable functions whose weak derivative is a finite measure
(rather than necessarily a $L^{1}$- function), in other words, to
the space of functions of bounded variation. This space, $\BV$ contains,
of course, functions that are qualitatively different from those in
$H^{1,1}$. For example, the characteristic function of a ball belongs
to $\BV$ and has a disconnected range $\{0,1\}$, while every element
in $\dot{H}^{1,1}(\mathbb{R}^{N})$ is represented by a function with
a connected range.

The space of functions of bounded variations is of particular importance
in geometric measure theory and in image analysis and has been a subject
of intense scholarly interest. Our arguments generally follow the
previous proofs of profile decompositions (\cite{ccbook,Solimini})
with adjustments to the peculiarities of the space. In Section 2 we
summarize some known properties of the functions of bounded variation.
In Section 3 we prove that imbedding into $L^{N/(N-\text{1)}}$ is
cocompact relative to the group of dyadic dilations and translations,
which allows us in Section 4 to get a profile decomposition with the
remainder vanishing in $L^{N/(N-\text{1)}}$. In Section 5 we give
some applications for minimizations of functionals that complement
the applications in the paper of Bartsch and Willem \cite{BartschWillem},
Theorem \ref{thm:5.2} in particular, as it requires a cocompactness
argument. In Section 6 we generalize the profile decomposition of
Section 4 to the case of subelliptic operators on nilpotent Lie groups.
The main results of the paper are Theorem \ref{thm:coco}, Theorem
\ref{thm:pd}, their Lie group generalizations Theorem \ref{thm:coco-1}
and Theorem \ref{thm:pd-1}, and results on existence of minimizers
Theorem \ref{thm:inf} and Theorem \ref{thm:5.2}.

\section{Space $\BV$}

We summarize here some known properties of the space of functions
with bounded variation. For a comprehensive exposition of the subject
we refer the reader to the book \cite{Ambrosio}. We assume throughout
the paper that $N\ge2$.
\begin{defn}
\label{def:bv}The space of functions of bounded variation $\dot{BV}(\mathbb{R}^{N})$
is the space of all measurable functions $u:\R^{N}\to\R$ vanishing
at infinity (i.e. $\forall\epsilon>0$ $|\{x\in\R^{N}:\;|u(x)|>\epsilon\}|<\infty$)
such that 
\begin{equation}
\|Du\|:=\sup_{v\in C_{0}^{\infty}(\R^{N};\R^{N}):\|v\|_{\infty}=1}\int_{\R^{N}}u\,\mathrm{div}\, v<\infty.\label{eq:bova}
\end{equation}

\end{defn}
The $\dot{BV}(\mathbb{R}^{N})$-norm can be interpreted as the total
variation $\|Du\|$ of the measure associated with the derivative
$Du$ (in the sense of distributions on $\R^{N}$). If $u\in C_{0}^{1}(\mathbb{R}^{N})$,
then the right hand side in (\ref{eq:bova}) by integration by parts
equals $\int|\nabla u|$. The value of the total variation of $Du$
on a measurable set $A\subset\R^{N}$ will be denoted as $\|Du\|_{A}$.

The space $\dot{BV}(\mathbb{R}^{N})$ is a conjugate space and therefore
is complete. We will follow the convention that calls the weak-star
convergence in the space of bounded variation \emph{weak convergence}.
It is well-known that $\dot{BV}(\mathbb{R}^{N})$ is separable and
therefore each bounded sequence in $\dot{BV}(\mathbb{R}^{N})$ has
a weakly (i.e. weakly-star) convergent subsequence.
\begin{defn}
\label{def:wc}A sequence $(u_{k})\subset\dot{BV}(\mathbb{R}^{N})$
is said to converge weakly to $u$ if $u_{k}\to u$ in $L_{\mathrm{loc}}^{1}(\R^{N})$
and the weak derivatives $\partial_{i}u_{k}$, $i=1,\dots,N$ converge
to $\partial_{i}u$ weakly as finite measures on $\R^{N}$. 
\end{defn}
We will need the following properties of $\dot{BV}(\mathbb{R}^{N})$.
\begin{enumerate}
\item \emph{Invariance.} The group of operators on $\BV$, 
\begin{equation}
D=\{g[j,y]:\; u\mapsto2^{(N-1)j}u(2^{j}(\cdot-y)\}{}_{j\in\Z,y\in\R^{N}},\label{eq:gauge}
\end{equation}
consists of linear isometries of $\dot{BV}(\mathbb{R}^{N})$, which
are also linear isometries on $L^{\frac{N}{N-1}}(\R^{N})$.
\item Density of $C_{0}^{\infty}(\R^{N})$ in \emph{strict} topology (the
closure of $C_{0}^{\infty}(\R^{N})$ in the norm topology is $\dot{H}^{1,1}$).
One says that $u_{k}$ converges strictly to $u$ if $\|u_{k}-u\|_{1^{*}}\to0$
and $\|Du_{k}\|\to\|Du\|$.
\item V.Maz'ya's inequality (often referred to as Sobolev, Aubin-Talenti
or Gagliardo-Nirenberg inequality) \cite{Mazya}
\begin{equation}
NV_{N}^{1/N}\|u\|_{1^{*}}\le\|Du\|,\label{eq:imbedding}
\end{equation}
where $1^{*}=\frac{N}{N-1}$ and $V_{N}$ is the volume of the unit
ball in $\R^{N}$. A local version of this inequality is 
\[
\|u\|_{1^{*},\Omega}\le C(\|Du\|_{\Omega}+\|u\|_{1,\Omega}),
\]
where $\Omega\subset\R^{N}$ is a bounded domain with sufficiently
regular, say locally $C^{1}$-boundary.
\item Hardy inequality :$ $
\[
\|Du\|\ge(N-1)\int_{\R^{N}}\frac{|u|}{|x|}dx\,.
\]
(It follows from the Hardy inequality in $\dot{H}^{1,1}$ and the
density of $C_{0}^{\infty}$ in $\BV$ with respect to the strict
convergence, if one first replaces $1/|x|$ with its $L^{N}$-approximations
from below.)
\item Local compactness: for any set $\Omega\subset\R^{N}$ of finite Lebesgue
measure, $\dot{BV}(\mathbb{R}^{N})$ is compactly imbedded into $L^{1}(\Omega)$. 
\item Chain rule (a simplified version of a more elaborate statement due
to Vol'pert, see \cite[Remark 3.98]{Ambrosio}): let $\varphi\in C^{1}(\R)$.
Then for every $u\in\BV,$ 
\begin{equation}
\|D\varphi(u)\|\le\|\varphi'\|_{\infty}\|Du\|.\label{eq:ChainRule}
\end{equation}

\end{enumerate}

\section{Cocompactness of the imbedding $\dot{BV}(\mathbb{R}^{N})\protect\hookrightarrow L^{1^{*}}(\mathbb{R}^{N})$}
\begin{defn}
Let $X$ be a Banach space and let $D$ be a group of linear isometries
of $X$. One says that a sequence $(x_{k})\subset X$ is \emph{$D$-vanishing}
(to be written $x_{k}\cw0$) if for any sequence $(g_{k})\subset D$
one has $g_{k}x_{k}\rightharpoonup0.$ A continuous imbedding of $X$
into a topological space $Y$ is called cocompact with respect to
$D$ if $x_{k}\cw0$ implies $x_{k}\to0$ in $Y$. 
\end{defn}
We extend this definition to $\BV$ by understanding weak convergence
in the sense of Definition \ref{def:wc}. 

The proof of the theorem below repeats much of the proof of Lemma
5.3 in \cite{ccbook}, but uses different argument for the evaluation
of sums of BV-seminorms over lattices. 
\begin{thm}
\label{thm:coco}The imbedding $\dot{BV}(\mathbb{R}^{N})\hookrightarrow L^{1^{*}}(\mathbb{R}^{N})$
is cocompact relative to the group (\ref{eq:gauge}), i.e. if, for
any sequence $(j_{k},y_{k})\subset\Z\times\R^{N}$, $g[j_{k},y_{k}]u_{k}\rightharpoonup0$
then $u_{k}\to0$ in $L^{1^{*}}(\R^{N})$.\end{thm}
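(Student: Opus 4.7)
The plan is to adapt the dyadic-lattice argument used by Solimini for $\dot{H}^{1,p}$, $1<p<N$, in \cite{Solimini,ccbook}, replacing the local $L^{p}$-Sobolev--Poincar\'e inequality on dyadic cubes by its BV analogue and using the compact imbedding $\BV\hookrightarrow L^{1}_{\mathrm{loc}}$ (property 5 of Section 2) in place of $L^{p}_{\mathrm{loc}}$-compactness. I argue by contradiction, passing to a subsequence along which $\|u_{k}\|_{1^{*}}\ge\delta>0$.

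For $j\in\Z$ let $\mathcal{L}_{j}$ denote the tiling of $\R^{N}$ by half-open dyadic cubes of side $2^{-j}$, and set $\alpha(u):=\sup_{j\in\Z,\,Q\in\mathcal{L}_{j}}\|u\|_{L^{1^{*}}(Q)}$. The first step is the scale-invariant bound
\[
\int_{\R^{N}}|u|^{1^{*}}\,\dx\le C\,\alpha(u)^{1^{*}-1}\|Du\|,
\]
whose exponents are forced by invariance of both sides under (\ref{eq:gauge}). I would derive it by writing $\int_{Q}|u|^{1^{*}}\le\|u\|_{L^{1^{*}}(Q)}^{1^{*}-1}\|u\|_{L^{1^{*}}(Q)}$ on each cube $Q\in\mathcal{L}_{j}$, estimating the second factor via the BV-Poincar\'e inequality $\|u-u_{Q}\|_{L^{1^{*}}(Q)}\le C\|Du\|_{Q}$ together with a control of the mean $u_{Q}$, summing over $Q\in\mathcal{L}_{j}$ (the additivity of $\|Du\|$ on disjoint cubes telescopes into the global $\|Du\|$), and letting $j\to-\infty$ to kill the $L^{1}$-remainders. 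Combined with BV-boundedness this forces $\alpha(u_{k})\ge c>0$, so one can pick $j_{k}\in\Z$ and $Q_{k}\in\mathcal{L}_{j_{k}}$ with center $y_{k}$ satisfying $\|u_{k}\|_{L^{1^{*}}(Q_{k})}\ge c/2$. Setting $v_{k}:=g[j_{k},y_{k}]u_{k}$, the isometric action of the group on both BV and $L^{1^{*}}$ yields a BV-bounded sequence with $\|v_{k}\|_{L^{1^{*}}(Q^{0})}\ge c/2$ on the unit cube $Q^{0}$; the composition identity $g[j',y']\circ g[j,y]=g[j+j',\,y'+2^{-j'}y]$ combined with the hypothesis on $u_{k}$ makes $(v_{k})$ itself $D$-vanishing, so by property 5, $v_{k}\to0$ in $L^{1}_{\mathrm{loc}}$.

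The main obstacle is closing the contradiction here: $L^{1}_{\mathrm{loc}}$-vanishing does not preclude $(|v_{k}|^{1^{*}})$ from keeping mass $\ge(c/2)^{1^{*}}$ in ever thinner spikes inside $Q^{0}$. I would resolve this by a Lions-type concentration-compactness extraction: along a subsequence the uniformly bounded positive measures $|v_{k}|^{1^{*}}\mathcal{L}^{N}$ converge weak-$*$ to a finite measure $\mu$ on $\overline{Q^{0}}$ with $\mu(\overline{Q^{0}})\ge(c/2)^{1^{*}}$, and since $v_{k}\to0$ in $L^{1}$ on a slight enlargement of $Q^{0}$, $\mu$ has no absolutely continuous part, hence is a countable atomic sum $\sum_{i}\nu_{i}\delta_{x_{i}}$. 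Fix an atom $x_{i}$ with $\nu_{i}>0$ and choose $j'_{k}\to+\infty$ to be the \emph{largest} integer for which the dyadic cube of side $2^{-j'_{k}}$ containing $x_{i}$ still carries $L^{1^{*}}$-mass $\ge\nu_{i}/2$; this matches $j'_{k}$ to the intrinsic scale of the concentrating spike. The blow-up $w_{k}:=g[-j'_{k},-2^{j'_{k}}x_{i}]v_{k}$ is then BV-bounded, its mass is not further concentrated on sub-cubes of $Q^{0}$ (by maximality of $j'_{k}$), and its $L^{1}(Q^{0})$-mass is bounded below by a constant multiple of $\nu_{i}^{1/1^{*}}$, so a BV-weak limit $w$ of $(w_{k})$ is nonzero. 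But the composition law exhibits $w_{k}$ as $g[j_{k}-j'_{k},\,2^{j'_{k}}(y_{k}-x_{i})]u_{k}$, and the hypothesis forces $w_{k}\rightharpoonup0$ in BV, so $w=0$, a contradiction.
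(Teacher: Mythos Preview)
Your overall strategy differs from the paper's, and the first half---the scale-invariant inequality $\int|u|^{1^{*}}\le C\,\alpha(u)^{1^{*}-1}\|Du\|$ and the extraction of the first blow-up $v_{k}$---is plausible, though the ``$j\to-\infty$'' step is sketchy since a general $u\in\BV$ need not lie in $L^{1}(\R^{N})$.

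The genuine gap is in the last step. Your maximality condition on $j'_{k}$ says only that the \emph{single} dyadic child of $Q^{0}$ containing the rescaled $x_{i}$ carries $L^{1^{*}}$-mass below $\nu_{i}/2$; it says nothing about the other $2^{N}-1$ children, so ``no further concentration'' does not follow, and neither does the $L^{1}$ lower bound. Concretely, take $v_{k}=k^{2(N-1)}\chi_{B(e_{k},\,k^{-2})}$ with $|e_{k}|=k^{-1}$: this is bounded in $\BV$, $|v_{k}|^{1^{*}}\mathcal{L}^{N}\rightharpoonup|B|\,\delta_{0}$, and $v_{k}\to0$ in $L^{1}_{\mathrm{loc}}$. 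Your rule gives $2^{-j'_{k}}\sim k^{-1}$ (the cube containing $0$ stops seeing the spike once its side drops below $|e_{k}|$), and after your blow-up $w_{k}$ becomes a spike of height $\sim k^{N-1}$ and radius $\sim k^{-1}$ centred at distance $\sim1$ from the origin. Then $\|w_{k}\|_{L^{1}(Q^{0})}\sim k^{-1}\to0$, and one checks $\partial_{i}w_{k}\rightharpoonup0$ as measures, so $w_{k}\rightharpoonup0$ in $\BV$ and no contradiction arises. The point is that the atom $\nu_{i}\delta_{x_{i}}$ encodes only the limiting \emph{location} of the spikes; your $j'_{k}$ latches onto the displacement scale $|e_{k}|\sim k^{-1}$, not the intrinsic spike scale $k^{-2}$. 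Iterating the blow-up might cure this, but that requires a separate termination argument you have not supplied.

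The paper avoids this difficulty by a direct argument using a dyadic decomposition of the \emph{range} of $u_{k}$ rather than the domain: one writes $u_{k}\approx\sum_{j}\chi_{j}(u_{k})$ where $\chi_{j}$ localises to $|u_{k}|\sim2^{(N-1)j}$, and after rescaling by $g[j,\cdot]$ each piece is uniformly bounded in $L^{\infty}\cap L^{1}$. For such pieces the local $L^{1^{*}}$-mass is dominated by the local $L^{1}$-mass, and the compact imbedding $\BV\hookrightarrow L^{1}_{\mathrm{loc}}$ finishes the job in one stroke, with no Lions-type lemma and no second blow-up.
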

\begin{proof}
Let $(u_{k})\subset\BV$ be such that for any $(j_{k},y_{k})\subset\Z\times\R^{N}$
, $g[j_{k},y_{k}]u_{k}\rightharpoonup0$. 

1. Assume first that $\sup_{k\in\N}\|u_{k}\|_{\infty}<\infty$ and
$\sup_{k\in\N}\|u_{k}\|_{1,\R{}^{N}}<\infty$. Then, using the $L^{\infty}$-boundedness
of $(u_{k})$ we have 
\[
\int_{(0,1)^{N}}|u_{k}|^{1^{*}}\le C\left(\|Du_{k}\|_{(0,1)^{N}}+|\int_{(0,1)^{N}}u_{k}|\right)\left(\int_{(0,1)^{N}}|u_{k}|\right)^{1^{*}-1}.
\]
Repeating this inequality for the domain of integration $(0,1)^{N}+y$,
$y\in\Z^{N}$, and adding the resulting inequalities over all $y\in\Z^{N}$,
we have 
\begin{equation}
\int_{\R^{N}}|u_{k}|^{1^{*}}\le C(\|Du_{k}\|_{\R{}^{N}}+\|u_{k}\|_{1,\R{}^{N}})\left(\sup_{y\in\Z^{N}}\int_{(0,1)^{N}}|u_{k}(\cdot-y)|\right)^{1^{*}-1}.\label{eq:ineq1}
\end{equation}
Here we use the fact that the sum $\sum_{y\in\Z^{N}}\|Du_{k}\|_{(0,1)^{N}+y}$
can be split into $3^{N}$ sums of variations over unions of cubes
with disjoint closures, each of them, as follows from Definition \ref{def:bv}
bound by $\|Du_{k}\|_{\R{}^{N}}$, which implies $\sum_{y\in\Z^{N}}\|Du_{k}\|_{(0,1)^{N}+y}\le3^{N}\|Du_{k}\|_{\R{}^{N}}$.

The last term in (\ref{eq:ineq1}) converges to zero, since by the
assumption $g[j_{k},y_{k}]u_{k}\rightharpoonup0$ we have $u_{k}(\cdot-y_{k})\to0$
in $L^{1}((0,1)^{N})$ for any sequence $(y_{k})\subset\R^{N}$. 

2. We now abandon the restrictions imposed in the previous step on
the sequence $(u_{k})$. Let $\chi\in C_{0}^{\infty}((\frac{1}{2^{N-1}},4^{N-1}))$
be such that $\chi(t)=t$ whenever $t\in[1,2^{N-1}]$. Let $\chi_{j}(t)=2^{(N-2)j}\chi(2^{-(N-1)j}|t|)$,
$j\in\Z$, $t\in\R$, and note that $\|\chi'_{j}\|_{\infty}=\|\chi'\|_{\infty}$.
Consider now a general sequence $(u_{k})\subset\BV$ satisfying $g[j_{k},y_{k}]u_{k}\rightharpoonup0$
for any $(j_{k},y_{k})\subset\Z\times\R^{N}$ . By (\ref{eq:imbedding})
we have 
\[
\int\chi_{j}(u_{k})^{1^{*}}\le C\|D\chi_{j}(u_{k})\|\left(\int\chi_{j}(u_{k})^{1^{*}}\right)^{1^{*}-1}.
\]
Let us sum up the inequalities over $j\in\Z$. Note that by (\ref{eq:ChainRule})
$\|D\chi_{j}(u_{k})\|\le\|\chi'\|_{\infty}\|Du_{k}\|_{A_{kj}}$ where
$A_{kj}=\{x\in\R^{N}:\;|u_{k}|\in(2^{(j-1)(N-1)},2^{(j+2)(N-1)})\}$.
Furthermore, one can break all the integers $j$ into six disjoint
sets $J_{1},\dots,J_{6}$ , such that, for any $m\in\{1,2,3,4,5,6\}$,
all functions $\chi_{j}(u_{k})$, $j\in J_{m}$, have pairwise disjoint
supports. Consequently, $\sum\|Du_{k}\|_{A_{kj}}\le6\|Du_{k}\|$.
We have therefore 
\[
\int_{\R^{N}}|u_{k}|^{1^{*}}\le C\|Du_{k}\|\sup_{j\in\Z}\left(\int\chi_{j}(u_{k})^{1^{*}}\right)^{1^{*}-1}.
\]
It suffices now to show that for any sequence $(j_{k})\subset\Z$,
$\chi_{j_{k}}(u_{k})\to0$ in $L^{1^{*}}$. Taking into account the
invariance of the $L^{1^{*}}$-norm under operators $g[j,y]$, it
suffices to show that $\chi(2^{j_{k}(N-1)}|u_{k}(2^{j_{k}}\cdot)|)\to0$
in $L^{1^{*}}$, but this is immediate from the assumption $g[j_{k},y_{k}]u_{k}\rightharpoonup0$
and the argument of the step 1, once we take into account that for
sequences uniformly bounded in $L^{\infty},$ $L^{1^{*}}$-convergence
follows from $L^{1}$ convergence.\end{proof}
\begin{cor}
\label{thm:coco-2}The imbedding $\dot{H}^{1,1}(\mathbb{R}^{N})\hookrightarrow L^{1^{*}}(\mathbb{R}^{N})$
is cocompact with respect to the group (\ref{eq:gauge})
\end{cor}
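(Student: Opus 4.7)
The plan is to deduce the corollary directly from Theorem \ref{thm:coco} by using the continuous (in fact isometric) inclusion $\dot{H}^{1,1}(\mathbb{R}^{N})\hookrightarrow\dot{BV}(\mathbb{R}^{N})$, since for any $u\in\dot{H}^{1,1}$ the distributional gradient $Du$ is the absolutely continuous measure $\nabla u\,\mathrm{d}x$, so that $\|Du\|=\|\nabla u\|_{1}$. Moreover, each operator $g[j,y]$ in the group (\ref{eq:gauge}) preserves $\dot{H}^{1,1}$ and acts isometrically on it: if $u\in\dot{H}^{1,1}$, then $g[j,y]u=2^{(N-1)j}u(2^{j}(\cdot-y))$ has weak gradient $2^{Nj}\nabla u(2^{j}(\cdot-y))$, whose $L^{1}$-norm equals $\|\nabla u\|_{1}$.

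The key observation is that the $D$-vanishing condition in $\dot{H}^{1,1}$, interpreted as in Definition \ref{def:wc}, is exactly the $D$-vanishing condition from Theorem \ref{thm:coco}. Specifically, suppose $(u_{k})\subset\dot{H}^{1,1}(\mathbb{R}^{N})$ is $D$-vanishing, i.e. for every sequence $(j_{k},y_{k})\subset\Z\times\R^{N}$ the sequence $g[j_{k},y_{k}]u_{k}$ converges to $0$ in $L^{1}_{\mathrm{loc}}(\R^{N})$ and its gradients converge to $0$ weakly as finite measures. Since $(u_{k})\subset\dot{H}^{1,1}\subset\dot{BV}$, this is precisely the hypothesis of Theorem \ref{thm:coco}, applied to the sequence $(u_{k})$ viewed as elements of $\dot{BV}$.

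Invoking Theorem \ref{thm:coco}, we conclude that $u_{k}\to 0$ in $L^{1^{*}}(\R^{N})$, which is exactly the cocompactness statement for the inclusion $\dot{H}^{1,1}\hookrightarrow L^{1^{*}}$. There is no substantial obstacle: the corollary is a pure restriction-to-a-subspace argument, and the only point meriting attention is verifying that the group $D$ leaves $\dot{H}^{1,1}$ invariant and that the appropriate notion of weak convergence on $\dot{H}^{1,1}$ coincides (or is at least stronger than) the BV weak convergence in Definition \ref{def:wc}, both of which are routine.
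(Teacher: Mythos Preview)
Your proposal is correct and matches the paper's intent: the corollary is stated without proof immediately after Theorem~\ref{thm:coco}, so the paper clearly regards it as an immediate consequence of the inclusion $\dot{H}^{1,1}(\mathbb{R}^{N})\subset\dot{BV}(\mathbb{R}^{N})$ together with the $D$-invariance of $\dot{H}^{1,1}$, exactly as you argue.
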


\section{Profile decomposition}
\begin{thm}
\label{thm:pd}Let $(u_{k})\subset\BV$ be a bounded sequence. For
each $n\in\N$ there exist $w^{(n)}\in\BV$, and sequences $(j_{k}^{(n)},y_{k}^{(n)})\subset\Z\times\R^{N}$
with $j_{k}^{(1)}=0$ $y_{k}^{(1)}=0$, satisfying 
\[
|j_{k}^{(n)}-j_{k}^{(m)}|+|y_{k}^{(n)}-y_{k}^{(m)}|\to\infty\text{ whenever }m\neq n,
\]
such that for a renumbered subsequence, $g[-j_{k}^{(n)},-y_{k}^{(n)}]u_{k}\rightharpoonup w^{(n)}$,
as $k\to\infty$,
\begin{equation}
r_{k}\stackrel{\mathrm{def}}{=}u_{k}-\sum_{n}g[j_{k}^{(n)},y_{k}^{(n)}]w^{(n)}\to0\text{ in }L^{\frac{N}{N-1}}(\mathbb{R}^{N}),\label{eq:vanish}
\end{equation}
where the series $\sum_{n}g[j_{k}^{(n)},y_{k}^{(n)}]w^{(n)}$ converges
in $\BV$ uniformly in $k$, and
\begin{equation}
\sum_{n\in\N}\|Dw^{(n)}\|+o(1)\le\|Du_{k}\|\le\sum_{n\in\N}\|Dw^{(n)}\|+\|Dr_{k}\|+o(1).\label{eq:norms}
\end{equation}
\end{thm}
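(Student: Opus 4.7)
The plan is an iterative extraction of profiles via weak-star compactness, with Theorem \ref{thm:coco} serving as the stopping criterion. Initialize with $(j_k^{(1)}, y_k^{(1)}) = (0,0)$ and let $w^{(1)}$ be the weak-star limit of a subsequence of $u_k$, which exists because $\BV$ is a separable dual space. Inductively, define
\[
r_k^{(n)} := u_k - \sum_{m=1}^n g[j_k^{(m)}, y_k^{(m)}] w^{(m)}.
\]
If $r_k^{(n)} \cw 0$ then Theorem \ref{thm:coco} gives $r_k^{(n)} \to 0$ in $L^{1^*}$ and the procedure terminates. Otherwise there exist $(j_k^{(n+1)}, y_k^{(n+1)})$ and a subsequence along which $g[-j_k^{(n+1)}, -y_k^{(n+1)}] r_k^{(n)} \rightharpoonup w^{(n+1)} \neq 0$ weakly in $\BV$ in the sense of Definition \ref{def:wc}. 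A diagonal subsequence handles the case where the extraction never terminates.

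To ensure $\sum_n \|Dw^{(n)}\| < \infty$, I would extract a quantitative lower bound $\|Dw^{(n+1)}\| \ge \phi\bigl(\limsup_k \|r_k^{(n)}\|_{1^*}\bigr)$, with $\phi$ positive away from zero, directly from the proof of Theorem \ref{thm:coco}: step 2 of that proof reduces non-vanishing of $\|r_k^{(n)}\|_{1^*}$ to non-vanishing of an $L^1$-average of some rescaled translate of $r_k^{(n)}$ on the unit cube, which after taking a weak-star limit produces $w^{(n+1)}$ with $\|w^{(n+1)}\|_{1^*}$ bounded below, whence Maz'ya's inequality (\ref{eq:imbedding}) yields the BV lower bound. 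The asymptotic decoupling $|j_k^{(n)} - j_k^{(m)}| + |y_k^{(n)} - y_k^{(m)}| \to \infty$ for $m \neq n$ is proved by contradiction: if it failed, a subsequence of the compositions $g[-j_k^{(n+1)}, -y_k^{(n+1)}] g[j_k^{(m)}, y_k^{(m)}]$ would converge to some $g_0 \in D$, and then the weak-star limit of $g[-j_k^{(n+1)}, -y_k^{(n+1)}] r_k^{(n)}$ would still carry the contribution $g_0 w^{(m)}$ that was supposed to have been subtracted, contradicting the non-triviality of $w^{(n+1)}$.

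The main obstacle I expect is the asymptotic additivity $\sum_n \|Dw^{(n)}\| + o(1) \le \|Du_k\|$, i.e., the lower bound in (\ref{eq:norms}). Since $\BV$ is neither Hilbert nor uniformly convex, orthogonality and Brezis--Lieb arguments are unavailable in their usual form, so I would exploit asymptotic support disjointness: using the strict density of $C_0^\infty$ from property (2), approximate each $w^{(m)}$ by a compactly supported $\tilde w^{(m)}$. A case analysis on whether the scale differences $j_k^{(n)} - j_k^{(m)}$ remain bounded (forcing translate separation at a common scale) or diverge (forcing the supports at vastly different scales to separate or nest inside disjoint regions) shows that for each fixed $n$, the supports of $g[j_k^{(m)}, y_k^{(m)}] \tilde w^{(m)}$, $m \le n$, are pairwise disjoint with disjoint neighbourhoods for $k$ large. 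Additivity of $\|D\cdot\|$ under such disjointness, which follows from Definition \ref{def:bv} by localizing the test vector fields $v$, then gives $\sum_{m=1}^n \|D \tilde w^{(m)}\| \le \|Du_k\| + o(1)$; taking strict limits in $\tilde w^{(m)}$ (so $\|D\tilde w^{(m)}\| \to \|Dw^{(m)}\|$) and then $n \to \infty$ yields the lower bound in (\ref{eq:norms}). The summability of $\|Dw^{(n)}\|$ forces the series $\sum_n g[j_k^{(n)}, y_k^{(n)}] w^{(n)}$ to converge in $\BV$ uniformly in $k$, since the $g[j,y]$ are isometries; applying Theorem \ref{thm:coco} to the full remainder $r_k$, which is $D$-vanishing by construction, yields (\ref{eq:vanish}). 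The upper bound in (\ref{eq:norms}) follows from the same asymptotic-disjointness estimate on the profiles combined with the triangle inequality $\|Du_k\| \le \bigl\|D\sum_n g[j_k^{(n)}, y_k^{(n)}] w^{(n)}\bigr\| + \|Dr_k\|$.
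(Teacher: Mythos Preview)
Your overall architecture---iterative extraction of weak-star limits, separation by contradiction, cocompactness for the remainder---matches the paper. Two tactical choices differ.

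\textbf{Profile selection.} You extract $w^{(n+1)}$ via a quantitative reading of Theorem~\ref{thm:coco}, obtaining $\|Dw^{(n+1)}\|\ge \phi(\limsup_k\|r_k^{(n)}\|_{1^*})$. The paper instead uses the greedy choice: set $t_n=\sup\{\|Dw\|: w \text{ is a nonzero weak limit of some } g_k v_k^{(n)}\}$ and pick $w^{(n+1)}$ with $\|Dw^{(n+1)}\|\ge t_n/2$. The greedy route is shorter because summability of $\|Dw^{(n)}\|$ (from the lower bound in \eqref{eq:norms}) forces $t_n\to 0$ directly, hence $D$-vanishing of the remainder, without re-entering the proof of Theorem~\ref{thm:coco}.

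\textbf{Lower bound in \eqref{eq:norms}.} You argue by approximating each $w^{(m)}$ strictly by $\tilde w^{(m)}\in C_0^\infty$ and claiming asymptotic disjointness of the supports of $g[j_k^{(m)},y_k^{(m)}]\tilde w^{(m)}$. This is not literally true when $|j_k^{(m)}-j_k^{(m')}|\to\infty$: the supports can nest concentrically (take $y_k^{(m)}=y_k^{(m')}$), so raw additivity of $\|D\cdot\|$ over disjoint sets does not apply. The paper avoids this by working on the dual side: pick near-optimal test fields $v^{(i)}\in C_0^\infty(\R^N;\R^N)$, $\|v^{(i)}\|_\infty\le 1$, form $V_k^{(n)}=\sum_{i\le n} v^{(i)}(2^{j_k^{(i)}}(\cdot-y_k^{(i)}))$, and test $u_k$ against $\mathrm{div}\,V_k^{(n)}$. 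The computation splits as $\int u_k\,\mathrm{div}\,V_k^{(n)}=\sum_i\int (g[-j_k^{(i)},-y_k^{(i)}]u_k)\,\mathrm{div}\,v^{(i)}\to\sum_i\int w^{(i)}\mathrm{div}\,v^{(i)}$, and one only needs $\|V_k^{(n)}\|_\infty\le 1+o(1)$, which in the nested-scale case follows because the coarse-scale $v^{(i)}$ is nearly constant across the fine-scale support. Your approach can be repaired along the same lines, but as written the disjointness claim is the one genuine soft spot.

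A minor point: in your separation argument you say the limit ``would still carry the contribution $g_0 w^{(m)}$''. Since $w^{(m)}$ has already been subtracted in $r_k^{(n)}$, the correct contradiction is the opposite one: the limit would be \emph{zero} (as $g[-j_k^{(m)},-y_k^{(m)}]r_k^{(m)}\rightharpoonup 0$ by construction), contradicting $w^{(n+1)}\neq 0$. This is exactly how the paper phrases it.
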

\begin{proof}
Without loss of generality we may assume that $u_{k}\rightharpoonup0$
(otherwise, one may pass to a weakly convergent subsequence and subtract
the weak limit). Observe that if $u_{k}\cw0$, the theorem is proved
with $r_{k}=u_{k}$ and $w^{(n)}=0$, $n\in\N$. Otherwise consider
the expressions of the form $w^{(1)}=\wlim g[-j_{k}^{(1)},-y_{k}^{(1)}]u_{k}$.
The sequence $u_{k}$ is bounded, $D$ is a set of isometries, so
the sequence $g[-j_{k}^{(1)},-y_{k}^{(1)}]u_{k}$ has a weakly convergent
subsequence. Since we assume that $u_{k}$ is not D-vanishing, there
exists necessarily a sequence $(j_{k}^{(1)},y_{k}^{(1)})$ such that,
evaluated on a suitable subsequence, $w^{(1)}\neq0$. Let $v_{k}^{(1)}=u_{k}-g[j_{k}^{(1)},y_{k}^{(1)}]w^{(1)}$,
and observe that $g[-j_{k}^{(1)},-y_{k}^{(1)}]v_{k}^{(1)}=g[-j_{k}^{(1)},-y_{k}^{(1)}]u_{k}-w^{(1)}\rightharpoonup0.$
If $v_{k}^{(1)}\cw0$, the assertion of the theorem is verified with
$r_{k}=v_{k}^{(1)}$. If not - we repeat the argument above - there
exist, necessarily, a sequence $(j_{k}^{(1)},y_{k}^{(1)})$ and a
$w^{(2)}\neq0$ such that, on a renumbered subsequence, $w^{(2)}=\wlim g[-j_{k}^{(2)},-y_{k}^{(2)}]v_{k}^{(1)}$.
Let us set $v_{k}^{(2)}=v_{k}^{(1)}-g[j_{k}^{(2)},y_{k}^{(2)}]w^{(2)}$.
Then we will have
\[
g[-j_{k}^{(2)},-y_{k}^{(2)}]v_{k}^{(2)}=g[-j_{k}^{(2)},-y_{k}^{(2)}]v_{k}^{(1)}-w^{(2)}\rightharpoonup0.
\]
If we assume that $g[-j_{k}^{(1)},-y_{k}^{(1)}]g[j_{k}^{(2)},y_{k}^{(2)}]w^{(2)}\not\rightharpoonup0$,
or, equivalently, that $|j_{k}^{(1)}-j_{k}^{(2)}|+|y_{k}^{(1)}-y_{k}^{(2)}|$
has a bounded subsequence, then, passing to a renamed subsequence
we will have $g[-j_{k}^{(1)},-y_{k}^{(1)}]g[j_{k}^{(2)},y_{k}^{(2)}]\to g[j_{0},y_{0}]$
in the sense of strong operator convergence, for some $j_{0}\in\Z$,
$y_{0}\in\R^{N}$. Then 
\[
w^{(2)}=\wlim g[-j_{k}^{(2)},-y_{k}^{(2)}]v_{k}^{(1)}=\wlim(g[-j_{k}^{(2)},-y_{k}^{(2)}]g[j_{k}^{(1)},y_{k}^{(1)}])g[-j_{k}^{(1)},-y_{k}^{(1)}]v_{k}^{(1)}
\]
\[
=\wlim(g[-j_{0},-y_{0}]g[-j_{k}^{(1)},-y_{k}^{(1)}]v_{k}^{(1)}=0,
\]
a contradiction that proves that $g[-j_{k}^{(1)},-y_{k}^{(1)}]g[j_{k}^{(2)},y_{k}^{(2)}]\rightharpoonup0$,
or, equivalently, $|j_{k}^{(1)}-j_{k}^{(2)}|+|y_{k}^{(1)}-y_{k}^{(2)}|\to\infty$.
Then we also have $g[-j_{k}^{(2)},-y_{k}^{(2)}]g[j_{k}^{(1)},y_{k}^{(1)}]\rightharpoonup0$. 

Recursively we define:
\[
v_{k}^{(n)}=v_{k}^{(n-1)}-g[j_{k}^{(n)},y_{k}^{(n)}]w^{(n)}=u_{k}-g[j_{k}^{(1)},y_{k}^{(1)}]w^{(1)}-\dots-g[j_{k}^{(n)},y_{k}^{(n)}]w^{(n)},
\]
 where $w^{(n)}=\wlim g[-j_{k}^{(n)},-y_{k}^{(n)}]v_{k}^{(n-1)}$,
calculated on a successively renumbered subsequence. We subordinate
the choice of $(j_{k}^{(n)},y_{k}^{(n)})$, and thus the extraction
of a subsequence for every given $n$, to the following requirements.
For every $n\in\N$ we set 
\[
W_{n}=\{w\in\BV\setminus\{0\}:\;\exists(j_{k},y_{k})\subset\Z\times\R^{N},(k)\subset\N^{\N}:g[-j_{m},-y_{m}]v_{k_{m}}^{(n)}\rightharpoonup w\},
\]
and 
\[
t_{n}=\sup_{w\in W_{n}}\|Dw\|.
\]
 Note that $t_{n}\le\sup\|u_{k}\|<\infty$. If for some $n$, $t_{n}=0$,
the theorem is proved with $r_{k}=v_{k}^{(n-1)}$. Otherwise, we choose
a $w^{(n+1)}\in W_{n}$ such that $\|Dw^{(n+1)}\|\ge\frac{1}{2}t_{n}$
and the sequence $(j_{k}^{(n+1)},y_{k}^{(n+1)})$ is chosen so that
on a subsequence that we renumber, $g[(-j_{k}^{(n+1)},-y_{k}^{(n+1)}]v_{k}^{(n)}\rightharpoonup w^{(n+1)}$.
An argument analogous to the one brought above for $n=1$ shows that
$g[(-j_{k}^{(p)},-y_{k}^{(p)}]g[(j_{k}^{(q)},y_{k}^{(q)}]\rightharpoonup0$,
or, equivalently, 
\begin{equation}
|j_{k}^{(p)}-j_{k}^{(q)}|+|y_{k}^{(p)}-y_{k}^{(q)}|\to\infty\label{eq:sep}
\end{equation}
whenever $p\neq q,\, p,q\leq n.$ 

Let us show the lower bound inequality in (\ref{eq:norms}). Let $n\in\N$
and let $(j_{k}^{(i)},y_{k}^{(i)})_{k}$, $w^{(i)}$ and $(v_{k}^{(i)})_{k}$,
$i=1,\dots,n$, be defined as above. Let $v^{(i)}\in C_{0}^{\infty}(\R^{N})$,
$\|v^{(i)}\|_{\infty}\le1$ $i=1,\dots,n$, and set $S_{k}^{(n)}=\sum_{i=1}^{n}g[j_{k}^{(i)},y_{k}^{(i)}]w^{(i)}$,
$V_{k}^{(n)}=\sum_{i=1}^{n}2^{(1-N)j_{k}^{(i)}}g[j_{k}^{(i)},y_{k}^{(i)}]v^{(i)}$.
(To clarify the construction: the operator $2^{(1-N)j/2}g[j,y]$ is
the $L^{2}(\R^{N})$-adjoint of $g[-j,-y]$. ) Then, noting that $\|V_{k}^{(n)}\|_{\infty}\le1$
and taking into account (\ref{eq:sep}),we have 
\[
\|Du_{k}\|\ge\int v_{k}^{(n)}\mathrm{div}V_{k}^{(n)}+\int S_{k}^{(n)}\mathrm{div}V_{k}^{(n)}
\]
\[
=\sum_{i=1}^{n}\int g[(-j_{k}^{(i)},-y_{k}^{(i)}]v_{k}^{(n)}v^{(i)}+\sum_{i=1}^{n}w^{(i)}\mathrm{div}v^{(i)}.
\]
Since the first term converges to zero by construction, while $v^{(i)}$
is arbitrary, we have $\|Du_{k}\|\ge\sum_{i=1}^{n}\|Dw^{(i)}\|+o_{k\to\infty}(1)$.
Since $n$ is arbitrary, the lower bound in (\ref{eq:norms}) follows.

Note now that %
{} $\sum_{i=1}^{\infty}t_{i}\le2\|Du_{k}\|+o(1)$. Furthermore, $\|DS_{k}^{(n)}\|\le\sum_{i=1}^{n}t_{i}+o(1),$
and on a suitable subsequence we have $\|DS_{k}^{(n)}\|\le2\sum_{i=1}^{n}t_{i}$,
and furthermore the inequality remains true even if one omits an arbitrary
subset of terms in the sum $S_{k}^{(n)}$. Consequently, by an elementary
diagonalization argument, on a suitable subsequence the series $S_{k}^{\infty}$
converges in $\BV$ uniformly in $k$. This together with (\ref{eq:sep})
implies that $u_{k}-S_{k}^{\infty}\stackrel{D}{\rightharpoonup0},$
which by Theorem \ref{thm:coco} implies (\ref{eq:vanish}). Finally,
the second inequality in (\ref{eq:norms}) follows from convergence
of $S_{k}^{\infty}$ and the triangle inequality for norms.
\end{proof}

\section{Sample minimization problems}

Let $a>0$ be such that $w:=a\chi_{B}$, where $B$ is a unit ball,
is a maximizer for the problem
\[
c_{0}=\sup_{u\in\BV:\|Du\|=1}\int_{\R^{N}}|u|^{1^{*}}.
\]
By scaling invariance, $w_{R}=R^{1-N}a\chi_{B_{R}}$ is then also
a maximizer. In the following, generally non-compact, problem the
existence of minimizers is proved by means of specific properties
of the space $\BV$ rather than concentration argument.
\begin{thm}
\label{thm:inf}Let function $F\in C(\R)$ be such that the following
supremum is poisitve and is attained:
\begin{equation}
0<m=\sup_{s\in\R}F(s)/|s|^{1^{*}}=F(t)/|t|^{1^{*}}\text{ for some }\; t\in\R.\label{eq:iii-1}
\end{equation}
 Then the maximum in the relation
\[
c=\sup_{u\in\BV:\|Du\|=1}\int_{\R^{N}}F(u).
\]
is attained at the point $w_{R}$ with $R=(\frac{a}{t})^{\frac{1}{N-1}}$.\end{thm}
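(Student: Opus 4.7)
The plan is to reduce the maximization of $\int F(u)$ to the already-solved maximization of $\int |u|^{1^*}$ by means of a pointwise inequality, and then to verify by direct computation that the candidate $w_R$ saturates both the pointwise inequality (on its support) and the $1^*$-inequality.

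First, I would extract from (\ref{eq:iii-1}) the pointwise bound $F(s)\le m|s|^{1^*}$ valid for every $s\in\R$ (with the convention $F(0)=0$, which must be read into the hypothesis in order for $\int F(u)$ to be defined on functions vanishing at infinity). Integrating this inequality against an arbitrary $u\in\BV$ with $\|Du\|=1$ and invoking the definition of $c_0$ gives
$$\int_{\R^N} F(u)\ \le\ m\int_{\R^N}|u|^{1^*}\ \le\ m\,c_0,$$
so $c\le m\,c_0$, and equality along a sequence would force both $u$ to be an extremal for the $1^*$-problem and $F(u)=m|u|^{1^*}$ almost everywhere on the support of $u$.

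Next, I would evaluate both sides at $u=w_R$. Since $w_R$ is obtained from the extremal $w$ by a dilation belonging to the invariance group of the $\BV$-seminorm, one has $\|Dw_R\|=\|Dw\|=1$, and similarly $\int|w_R|^{1^*}=\int|w|^{1^*}=c_0=a^{1^*}V_N$. On $B_R$ the function $w_R$ takes the constant value $R^{1-N}a$, and the prescription $R=(a/t)^{1/(N-1)}$ is designed exactly so that $R^{1-N}a=t$. Therefore
$$\int_{\R^N} F(w_R)\ =\ F(t)\,|B_R|\ =\ m\,t^{1^*}\,V_N R^N.$$
Since $R^N=(a/t)^{N/(N-1)}=(a/t)^{1^*}$, one has $t^{1^*}R^N=a^{1^*}$, and hence $\int F(w_R)=m\,a^{1^*}V_N=m\,c_0$, matching the upper bound.

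I do not anticipate a significant obstacle: the proof is essentially an algebraic matching calculation enabled by the scale-invariance of the ratio $\int|u|^{1^*}/\|Du\|^{1^*}$, which makes $\int|w_R|^{1^*}$ independent of $R$ and thus allows the free parameter $R$ to be tuned to place the constant value of $w_R$ precisely at the point $t$ where $F(s)/|s|^{1^*}$ achieves its maximum. The only delicate point is the implicit requirement $F(0)=0$, which I would state explicitly in the proof.
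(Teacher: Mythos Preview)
Your proposal is correct and follows essentially the same approach as the paper's own proof: the upper bound $c\le mc_0$ via the pointwise inequality $F(s)\le m|s|^{1^*}$, and the matching lower bound by direct evaluation at $w_R$, using that the choice of $R$ makes $w_R\equiv t$ on $B_R$. Your write-up is somewhat more detailed than the paper's (in particular you spell out the scaling computation $t^{1^*}R^N=a^{1^*}$ and flag the implicit hypothesis $F(0)=0$), but the argument is the same.
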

\begin{proof}
Since $F(u)\le m|u|^{1^{*}}$, we have $c\le mc_{0}$. On the other
hand, comparing the supremum with the value of the functional at $w_{R}$
we have $c\ge\int F(w_{R})=F(t)|B_{R}|=m|t|^{1^{*}}|B_{R}|=m\int|aR^{1-N}\chi_{B_{R}}|^{1^{*}}=mc_{0}$.
Therefore $c=mc_{0}$ and is attained at $w_{R}$.\end{proof}
\begin{thm}
\label{thm:5.2}Let $0<\lambda<N-1$. Then the minimum in\textup{ }

\textup{
\[
\kappa=\inf_{u\in\BV:\int_{\R^{N}}|u|^{1^{*}}=1}\|Du\|-\lambda\int\frac{|u|}{|x|}dx
\]
is attained. }\end{thm}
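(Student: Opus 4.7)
The plan is to combine Hardy's inequality and Maz'ya's inequality to obtain a sharp lower bound for $\kappa$, and then to exhibit an explicit minimizer at which both inequalities are saturated simultaneously. The key observation is that in the $\BV$ framework, the characteristic function of a ball centered at the origin achieves equality in Hardy's inequality (whereas for smooth test functions Hardy is always strict); this feature makes the problem approachable without heavy concentration analysis.

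First I would derive the lower bound. Writing $E(u):=\|Du\|-\lambda\int|u|/|x|\,dx$, Hardy gives $\int|u|/|x|\,dx\le\|Du\|/(N-1)$, so
\[
E(u)\ \ge\ \frac{N-1-\lambda}{N-1}\,\|Du\|,
\]
and the hypothesis $0<\lambda<N-1$ ensures this coefficient is strictly positive. Maz'ya's inequality then yields $\|Du\|\ge NV_N^{1/N}$ on the constraint $\|u\|_{1^*}=1$, whence $\kappa\ \ge\ NV_N^{1/N}(N-1-\lambda)/(N-1)$.

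Next I would verify that this lower bound is attained. Set $u^{*}:=V_N^{-(N-1)/N}\chi_B$, where $B$ is the unit ball centered at the origin. Direct computation gives $\|u^{*}\|_{1^*}=1$; the distributional derivative is the surface measure on $\partial B$, so $\|Du^{*}\|$ equals the perimeter of $B$ times the coefficient, namely $NV_N^{1/N}$, saturating Maz'ya. Via spherical coordinates $\int_B |x|^{-1}\,dx = NV_N/(N-1)$, so $\int|u^{*}|/|x|\,dx = NV_N^{1/N}/(N-1)$, saturating Hardy. Therefore $E(u^{*})=NV_N^{1/N}(N-1-\lambda)/(N-1)$, which matches the lower bound, and $u^{*}$ is a minimizer. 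The same argument shows that each dilate $V_N^{-(N-1)/N}R^{-(N-1)}\chi_{B_R}$ is equally a minimizer.

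The only substantive point is the Hardy equality at $\chi_B$: for $C^1_0$ approximants Hardy is strict, and the equality emerges precisely in the $\BV$ limit, which is why this problem is naturally posed in $\BV$ rather than $\dot{H}^{1,1}$. A proof via the cocompactness machinery of Theorem~\ref{thm:pd} is also available and would probably be the author's preferred route: given a minimizing sequence $(u_k)$, bounded in $\BV$ by the Hardy-Maz'ya estimate above, apply the profile decomposition and exploit the identity $\int|g[j,y]u|/|x|\,dx=\int|u(z)|/|z+2^{j}y|\,dz$ together with Hardy applied to translates to bound each $\int|g_k^{(n)}w^{(n)}|/|x|\,dx$ by $\|Dw^{(n)}\|/(N-1)$. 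Combined with the lower bound on $\|Du_k\|$ in Theorem~\ref{thm:pd}, this reproduces $\kappa\ge NV_N^{1/N}(N-1-\lambda)/(N-1)$ and forces the infimum to be carried by a single profile with $\beta_\infty=0$, which then has to coincide (up to dilation) with $u^{*}$.
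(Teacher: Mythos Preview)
Your explicit argument is correct and genuinely different from the paper's. The paper proves Theorem~\ref{thm:5.2} by the concentration-compactness route you anticipate in your last paragraph: take a minimizing sequence, apply the profile decomposition of Theorem~\ref{thm:pd}, split the Hardy term along the profiles, combine the lower bound in \eqref{eq:norms} with a Brezis--Lieb expansion of the constraint, and then invoke a homogeneity/convexity argument to force all mass onto a single profile. No explicit minimizer is identified there.

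Your approach, by contrast, is entirely elementary: you observe that the normalized indicator $u^{*}=V_N^{-(N-1)/N}\chi_B$ simultaneously saturates Maz'ya's inequality \eqref{eq:imbedding} (isoperimetric case) and the $\BV$ Hardy inequality, and the computations $\|Du^{*}\|=NV_N^{1/N}$ and $\int_B|x|^{-1}\,dx=NV_N/(N-1)$ check out. The key step---equality in Hardy at $\chi_B$---is a feature specific to $\BV$ (it fails in $\dot H^{1,1}$), and it is exactly what lets you bypass the profile decomposition. Your argument is shorter, gives the value $\kappa=NV_N^{1/N}(N-1-\lambda)/(N-1)$ and the minimizer explicitly, and in a sense undercuts the paper's own remark in Section~\ref{intro} that this theorem ``requires a cocompactness argument''. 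The paper's proof, on the other hand, illustrates the machinery that is the paper's main contribution and would survive perturbations of the functional (e.g.\ replacing $|x|^{-1}$ by a weight for which no explicit extremal is available), whereas yours relies on the happy coincidence that both sharp inequalities are saturated by the same function.
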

\begin{proof}
The proof of the argument is a standard use of profile decomposition
and may be abbreviated. Let $(u_{k})$ be a minimizing sequence. Applying
Theorem \ref{thm:pd} and noting that there exists a subset of indices
$I\subset\N$ such that $\int\frac{|u_{k}|}{|x|}dx\to\sum_{n\in I}\int\frac{|w^{(n)}|}{|x|}dx$
(provided that the functions $w^{(n)}$ are redefined, as it is always
possible, by appication of constant operator $g[j_{n,}y_{n}]\subset D$),
we have, using the notation 
\[
J(u)=\|Du\|-\lambda\int\frac{|u|}{|x|}dx,
\]
and recalling (\ref{eq:norms}), 
\begin{equation}
J(u_{k})\ge\sum_{n\text{\ensuremath{\in}I}}J(w^{(n)})+\sum_{n\not\in I}\|Dw^{(n)}\|+o(1).\label{eq:a}
\end{equation}
On the other hand, from Brezis-Lieb lemma follows 
\begin{equation}
\int_{\R^{N}}|u_{k}|^{1^{*}}=\sum_{n\in\N}\int_{\R^{N}}|w^{(n)}|^{1^{*}}+o(1).\label{eq:b}
\end{equation}
Moreover, each $w^{(n)}$ necessarily minimizes the respective functional
($J$ if $n\in I$, $\|D\cdot\|$ if $n\notin I$) over the functions
$u\in\BV$ satisfying $\int_{\R^{N}}|u|^{1^{*}}=\int_{\R^{N}}|w^{(n)}|^{1^{*}}$.
In particular, $w^{(n)}$ for $n\notin I$ are multiples of the characteristic
function of some ball, which are clearly not minimizers, up to normalization,
for the functional $J$. 

From the standard convexity argument, relations (\ref{eq:a}) and
(\ref{eq:b}) imply that, necessarily, $w^{(n)}=0$ for all $n\in\N$
except $n=m$ with some $m\in I$. Thus, $\int_{\R^{N}}|w^{(m)}|^{1^{*}}=\int_{\R^{N}}|u_{k}|^{1^{*}}=1$
and $J(w^{(m)})\le J(u_{k})=\kappa+o(1)$. This implies that $w^{(m)}$
is a minimizer.
\end{proof}

\section{6.1 Profile decomposition in $\dot{BV}(G)$ for Carnot groups}

The space of bounded variations on stratified nilpotent Lie groups,
often called Carnot groups, has been studied in detail by Garofalo
and Nhieu \cite{Garofalo} and we first summarize relevant definitions
and properties from that paper. The underlying Sobolev space is defined
on a Carnot group G by a set of vector fields $\{X_{j}\}_{j=1,\dots n}$
which satisfy the Hörmander condition. More specifically, vectors
$\{X_{j}\}_{j=1,\dots n}$ span the first stratum $Y_{1}$ of the
associated Lie algebra, their commutators spans the second stratum
$Y_{2}$, and further successive commutations define furthersuccessive
strata. Since the group is nilpotent, there is a minimal number $m\ge1$
such that $Y_{m+1}=\{0\}$ and Hörmander condition is equivalent to
$Y_{1}\dots Y_{m}$ spanning the whole Lie algebra. The left shift
invariant Haar measure on such groups coincides with the Lebesgue
measure. The case $m=1$ is the Euclidean case and the most commonly
occurring example in literature is the Heisenberg group corresponding
to $N=3$, $n=2$, and $m=2$. The subelliptic Sobolev space $\dot{H}^{1,1}(G)$
, $n>1$, is the space of measurable functions such that 
\[
\|u\|_{1,1}=\int_{G}\sum_{j=1}^{n}|X_{j}u|d\lambda.
\]
The related space of bounded variations $\dot{BV}(G)$ is defined
by the norm 
\begin{equation}
\|Du\|_{G}=\sup_{v\in C_{0}^{1}(G;\R^{N}):\;\|v\|_{\infty}\le1}\int_{G}u\sum_{j=1}^{n}X_{j}^{*}v_{j}\; d\lambda.\label{eq:dug}
\end{equation}
There is a continuous imbedding $\dot{BV}(G)\hookrightarrow L^{1^{*}}(G)$
where $1^{*}=\frac{Q}{Q-1}$ , where $Q=\sum_{i=1}^{m}i\dim Y_{i}$.

A seminorm $\|Du\|_{\Omega}$, where $\Omega\subset G$ is a Lebesgue-measurable
set, is defined by the expression (\ref{eq:dug}) with the integration
over $\Omega$ instead of $G$. When $\Omega$ is a bounded domain
with Lipschitz boundary, $\|Du\|_{\Omega}+\int_{\Omega}|u|$ is a
norm defining the subelliptic Sobolev space $BV(\Omega)$ which is
compactly imbedded into $L^{1}(\Omega)$.

The simplified chain rule in $\dot{BV}(G)$, given a function $f\in C^{1}(\R)$,
is the inequality 
\[
\|Df(u)\|_{\Omega}\le\|f'\|_{\infty}\|Du\|_{\Omega}.
\]

We define the group $D_{G}$ of isometries on $\dot{BV}(G)$ as a
product group of left shifts by $G$, $u\mapsto u\circ\eta$, $\eta\in G$,
and of discrete dilations 
\[
h_{j}=u\mapsto2^{(Q-1)j}u\circ\exp\circ\delta_{2^{j}}\circ\exp^{-1},\; j\in\Z,
\]
 where the anisotropic dilations $\delta_{t}$, $t>0$, map the variables
$(y_{1},\dots,y_{m})\in Y_{1}\times\dots\times Y_{m}$ of the stratified
Lie algebra of $G$ into $ty_{1},\dots t^{m}y_{m})$. 

We have
\begin{thm}
\label{thm:coco-1}Let $G\backsimeq\R^{N}$, $N\ge2$, be a stratified
nilpotent Lie group. The imbedding $\dot{BV}(G)\hookrightarrow L^{\frac{Q}{Q-1}}(G)$
is cocompact with respect to the operator group $D_{G}$ . More specifically,
if for any sequence $(j_{k},\eta_{k})\subset\Z\times G$, $(h_{j_{k}}u_{k})\circ\eta_{k}\rightharpoonup0$
then $u_{k}\to0$ in $L^{\frac{Q}{Q-1}}(G)$.
\end{thm}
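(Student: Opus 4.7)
The plan is to mimic the two-step argument for Theorem \ref{thm:coco}, with the homogeneous dimension $Q$ playing the role of $N$. All the requisite ingredients are in hand: the local compact imbedding $BV(\Omega)\hookrightarrow L^{1}(\Omega)$ for bounded Lipschitz $\Omega\subset G$, the simplified chain-rule inequality, the global Sobolev-type imbedding with critical exponent $1^{*}=Q/(Q-1)$, and the fact that $D_{G}$ acts by simultaneous linear isometries of $\dot{BV}(G)$ and $L^{1^{*}}(G)$.

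For Step 1 I would assume $\sup_{k}(\|u_{k}\|_{\infty}+\|u_{k}\|_{L^{1}(G)})<\infty$. Fix a bounded open Lipschitz domain $\Omega_{0}\subset G$ (e.g.\ a Carnot--Carath\'eodory ball) and a countable set $\Gamma\subset G$ such that the left translates $\eta\Omega_{0}$, $\eta\in\Gamma$, cover $G$ with uniformly bounded multiplicity and decompose into finitely many subfamilies of pairwise closure-disjoint translates. Apply the local Maz'ya inequality on each translate, absorb one factor using the $L^{\infty}$-bound, sum over $\eta\in\Gamma$, and exploit the subfamily decomposition to control $\sum_{\eta}\|Du_{k}\|_{\eta\Omega_{0}}$ by $C\|Du_{k}\|_{G}$. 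The resulting inequality
\begin{equation*}
\int_{G}|u_{k}|^{1^{*}}\le C\bigl(\|Du_{k}\|_{G}+\|u_{k}\|_{L^{1}(G)}\bigr)\sup_{\eta\in G}\|u_{k}\|_{L^{1}(\eta\Omega_{0})}^{1^{*}-1}
\end{equation*}
then reduces the claim to vanishing of the supremum; and if $\|u_{k}\circ\eta_{k}\|_{L^{1}(\Omega_{0})}=\|u_{k}\|_{L^{1}(\eta_{k}\Omega_{0})}$ had a positive lower bound along some subsequence, the $D_{G}$-vanishing hypothesis specialized to $j_{k}=0$ would yield $u_{k}\circ\eta_{k}\rightharpoonup 0$ in $\dot{BV}(G)$, and compactness $BV(\Omega_{0})\hookrightarrow L^{1}(\Omega_{0})$ would contradict this lower bound.

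For Step 2 I would remove the boundedness assumption by the dyadic truncation $\chi_{j}(t)=2^{(Q-2)j}\chi(2^{-(Q-1)j}|t|)$ with $\chi\in C_{0}^{\infty}((2^{1-Q},4^{Q-1}))$ and $\chi(t)=t$ on $[1,2^{Q-1}]$, calibrated so that $\|\chi_{j}'\|_{\infty}=\|\chi'\|_{\infty}$ is $j$-independent. Apply the global Sobolev imbedding to each $\chi_{j}(u_{k})$, sum over $j\in\Z$, use the chain rule to bound $\|D\chi_{j}(u_{k})\|_{G}\le\|\chi'\|_{\infty}\|Du_{k}\|_{A_{kj}}$ on dyadic annular sets $A_{kj}$, and invoke a finite-colour partition of $\Z$ on which the supports $\{\chi_{j}(u_{k})\neq 0\}$ are disjoint within each colour class to absorb $\sum_{j}\|Du_{k}\|_{A_{kj}}$ into $c\|Du_{k}\|_{G}$. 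This reduces the claim to showing $\chi_{j_{k}}(u_{k})\to 0$ in $L^{1^{*}}(G)$ for every sequence $(j_{k})\subset\Z$; applying the dilation $h_{j_{k}}$ or its inverse and using $L^{1^{*}}$-invariance rescales this to a Step-1 statement for a uniformly $L^{\infty}$- and $L^{1}$-bounded sequence of the form $\chi(|\cdot|)$ composed with a rescaled $u_{k}$, whose Step-1 hypothesis is precisely what the $D_{G}$-vanishing assumption supplies.

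The main obstacle is the geometric covering lemma underlying Step 1: on a general Carnot group one must construct a cover by left translates of a fixed bounded Lipschitz domain with uniformly bounded multiplicity that admits a finite partition into subfamilies of pairwise closure-disjoint translates. In $\R^{N}$ this is supplied by the lattice $\Z^{N}$ and its cosets modulo $3\Z^{N}$; on a Carnot group one must either invoke the existence of a cocompact discrete subgroup (available for rational Carnot groups via Malcev's theorem, and explicitly for the Heisenberg group) or build the cover directly via a Vitali-type selection, using left-invariance of Haar measure and the doubling property of the Carnot--Carath\'eodory metric. Once this combinatorial-geometric ingredient is in place, the remainder of the argument transfers from the Euclidean proof with purely notational modifications.
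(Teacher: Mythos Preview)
Your proposal is correct and follows essentially the same approach as the paper: the paper's proof is a one-sentence sketch stating that one combines the Euclidean argument of Theorem~\ref{thm:coco} with the covering lemma \cite[Lemma~A1]{ccbook} (as in the $\dot{H}^{1,2}(G)$ case of \cite[Lemma~9.4]{ccbook}), and you have spelled out precisely this combination, correctly isolating the covering lemma as the one genuinely new ingredient.
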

The proof is a straightforward combination of the proof of Theorem
\ref{thm:coco} and the proof of an analogous statement for $\dot{H}^{1,2}(G)$
(\cite[Lemma 9.4]{ccbook}, once notes the need in the covering lemma
\cite[Lemma A1]{ccbook}.
\begin{thm}
\label{thm:pd-1}Let $G\backsimeq\R^{N}$, $N\ge2$, be a stratified
nilpotent Lie group. Let $(u_{k})\subset\dot{BV}(G)$ be a bounded
sequence. For each $n\in\N$ there exist $w^{(n)}\in\dot{BV}(G)$,
and sequences $(j_{k}^{(n)},\eta_{k}^{(n)})\subset\Z\times G$ with
$j_{k}^{(1)}=0$, $\eta_{k}^{(1)}=e$, satisfying 
\[
|j_{k}^{(n)}-j_{k}^{(m)}|+|\exp^{-1}(\eta_{k}^{(n)}\circ\eta_{k}^{(m)^{-1}})|\to\infty\text{ whenever }m\neq n,
\]
such that for a renumbered subsequence, $(h_{-j_{k}}u_{k})\circ\eta_{k}^{-1}\rightharpoonup w^{(n)}$,
as $k\to\infty$,
\begin{equation}
r_{k}\stackrel{\mathrm{def}}{=}u_{k}-\sum_{n}h_{j_{k}^{(n)}}(w^{(n)}\circ\eta_{k}^{(n)})\to0\text{ in }L^{\frac{Q}{Q-1}}(G),\label{eq:vanish-1}
\end{equation}
where the series $\sum_{n}h_{j_{k}^{(n)}}(u_{k}\circ\eta_{k}^{(n)})w^{(n)}$
converges in $\dot{BV}(G)$ uniformly in $k$, and
\begin{equation}
\sum_{n\in\N}\|Dw^{(n)}\|+o(1)\le\|Du_{k}\|\le\sum_{n\in\N}\|Dw^{(n)}\|+\|Dr_{k}\|+o(1).\label{eq:norms-1}
\end{equation}

\end{thm}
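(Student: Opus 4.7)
The plan is to reproduce the iterative extraction of Theorem \ref{thm:pd} in the noncommutative setting of the Carnot group $G$, substituting the Euclidean translation-dilation group by $D_G$ and invoking Theorem \ref{thm:coco-1} at the final step. After passing to a weakly convergent subsequence and subtracting its weak limit (which becomes $w^{(1)}$ with $j_k^{(1)}=0$, $\eta_k^{(1)}=e$), one may assume $u_k \rightharpoonup 0$. Set $v_k^{(0)} = u_k$; at stage $n$, define
\[
W_n = \{w \in \dot{BV}(G) \setminus \{0\} : (h_{-j_m} v_{k_m}^{(n)}) \circ \eta_m^{-1} \rightharpoonup w \text{ for some } (j_m,\eta_m) \subset \Z \times G, \, (k_m) \subset \N^{\N}\}
\]
and $t_n = \sup_{w \in W_n} \|Dw\|_G < \infty$. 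If $t_n = 0$, stop with $r_k = v_k^{(n)}$. Otherwise select $w^{(n+1)} \in W_n$ with $\|Dw^{(n+1)}\|_G \geq t_n/2$ and an associated $(j_k^{(n+1)}, \eta_k^{(n+1)})$, and set $v_k^{(n+1)} = v_k^{(n)} - h_{j_k^{(n+1)}}(w^{(n+1)} \circ \eta_k^{(n+1)})$. The separation statement
\[
|j_k^{(p)} - j_k^{(q)}| + |\exp^{-1}(\eta_k^{(p)} \circ \eta_k^{(q)^{-1}})| \to \infty, \qquad p \neq q,
\]
is verified by the same contradiction argument as in Theorem \ref{thm:pd}: a bounded subsequence would permit extraction along which the composition $h_{-j_k^{(p)}} \circ (\,\cdot\, \circ \eta_k^{(p)^{-1}}) \circ h_{j_k^{(q)}} \circ (\,\cdot\, \circ \eta_k^{(q)})$ converges strongly in $\dot{BV}(G)$ to a fixed element of $D_G$, forcing $w^{(q)} = 0$.

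For the lower bound in (\ref{eq:norms-1}), fix $n$ and test fields $v^{(i)} \in C_0^1(G;\R^n)$ with $\|v^{(i)}\|_\infty \le 1$, and form
\[
V_k^{(n)} = \sum_{i=1}^n 2^{(1-Q)j_k^{(i)}}\,(h_{j_k^{(i)}} v^{(i)}) \circ \eta_k^{(i)^{-1}},
\]
which satisfies $\|V_k^{(n)}\|_\infty \le 1$ because the separation condition makes the supports pairwise disjoint for $k$ large. Pairing $u_k$ against $\sum_j X_j^* V_{k,j}^{(n)}$ via (\ref{eq:dug}) and invoking the $D_G$-invariance of the integration together with the weak convergences $(h_{-j_k^{(i)}} v_k^{(n)}) \circ \eta_k^{(i)^{-1}} \rightharpoonup 0$ for $i \le n$, one obtains
\[
\|Du_k\|_G \,\ge\, \sum_{i=1}^n \int_G w^{(i)} \sum_{j=1}^n X_j^* v_j^{(i)} \, d\lambda \,+\, o(1).
\]
Taking suprema over the $v^{(i)}$ and letting $n \to \infty$ gives the left inequality of (\ref{eq:norms-1}), and in particular $\sum_i t_i \le 2\|Du_k\|_G + o(1)$.

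This summability, together with the separation condition and a standard diagonal extraction, implies that the partial sums $S_k^{(n)} = \sum_{i=1}^n h_{j_k^{(i)}}(w^{(i)} \circ \eta_k^{(i)})$ converge in $\dot{BV}(G)$ to some $S_k^{\infty}$ uniformly in $k$, and that $u_k - S_k^{\infty} \cw 0$; Theorem \ref{thm:coco-1} then delivers (\ref{eq:vanish-1}), while the upper bound in (\ref{eq:norms-1}) follows from the triangle inequality. The main obstacle is the operator-composition step above: in the noncommutative setting, $h_j \circ (\,\cdot\, \circ \eta) \circ h_j^{-1}$ equals left shift by $\exp(\delta_{2^j}\exp^{-1}(\eta))$, so the natural ``distance'' between the parameters $(j_k^{(p)},\eta_k^{(p)})$ and $(j_k^{(q)},\eta_k^{(q)})$ is precisely the anisotropic one displayed in the separation statement. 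Once this geometric point is in place, the Euclidean proof of Theorem \ref{thm:pd} transcribes with only notational changes.
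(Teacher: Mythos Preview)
Your proposal is correct and follows exactly the route the paper indicates: the paper's own proof of Theorem~\ref{thm:pd-1} is simply the sentence ``The proof is a straightforward modification of the proof of Theorem~\ref{thm:pd} on the lines of \cite[Remark~9.3]{ccbook},'' and you have spelled out that modification, including the correct identification of the anisotropic separation quantity arising from the noncommutative composition of shifts and dilations.
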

The proof is a straightforward modification of the proof of Theorem
\ref{thm:pd} on the lines of \cite[Remark 9.3]{ccbook}. 
\begin{rem}
As we already mentioned, there is no profile decomposition in $L^{\infty}$,
while the existing proof of profile decomposition in Banach spaces
(in \cite{ST2}) is based on a bound on the norms of profiles in the
form 
\[
\sum_{n}\delta(\|w^{(n)}\|/\|u_{k}\|)\le1
\]
that involves the modulus of convexity of the space, and it is not
known in general which non-reflexive spaces admit profile decompositions.
Is possible, however, in presence of an imbedding into a uniformly
convex space (such as $L^{1^{*}}$ in the present paper) to write
a profile decomposition in terms of the target space, but this yields
convergence of the series representing defect of compactness only
in the weaker norm of the target space. 

\textsc{Acknowledgement}. The authors thank Rupert Frank for his kind
remark concerning Section 5.\end{rem}

\end{document}